\documentclass[12pt,oneside,english]{amsart}
\textwidth=13.5cm \textheight=24cm \hoffset=-1cm
\usepackage[latin1]{inputenc}
\usepackage{amssymb}

\makeatletter
\newtheorem{theorem}{Theorem}
\newtheorem{lemma}{Lemma}

\newtheorem{proposition}{Proposition}
\newtheorem{remark}{Remark}

\usepackage{babel}

\makeatother
\begin{document}
\baselineskip=17pt
\title[Set of all densities ]{Set of all densities of exponentially S-numbers}

\author{Vladimir Shevelev}
\address{Department of Mathematics \\Ben-Gurion University of the
 Negev\\Beer-Sheva 84105, Israel. e-mail:shevelev@bgu.ac.il}

\subjclass{11A51}

\begin{abstract}
Let $\mathbf{G}$  be the set of all finite or infinite increasing sequences
of positive integers beginning with 1. For a sequence $S=\{s(n)\}, n\geq1,$ from
$\mathbf{G}$ a positive number $N$ is called an exponentially
$S$-number $(N\in E(S)),$ if all exponents in its prime power factorization are in
$S.$ The author \cite{2} proved that, for every sequence $S\in \mathbf{G},$
the sequence of exponentially $S$-numbers has a density $h=h(E(S))\in [\frac{6}{\pi^2}, 1].$
In this note we study the set $\{h(E(S)\}$ of all such densities.
\end{abstract}

\maketitle

\section{Introduction}
Let $\mathbf{G}$ be the set of all finite or infinite increasing sequences
of positive integers beginning with 1. For a sequence $S=\{s(n)\}, n\geq1,$ from
$\mathbf{G},$ a positive number $N$ is called an exponentially
$S$-number $(N\in E(S)),$ if all exponents in its prime power factorization are in
$S.$ The author \cite{2} proved that, for every sequence $S\in \mathbf{G},$
the sequence of exponentially $S$-numbers has a density
$h=h(E(S))\in [\frac{6}{\pi^2}, 1].$
More exactly, the following theorem was proved in \cite{2}:
\begin{theorem}\label{t1}
For every sequence $S\in \mathbf{G}$ the sequence of exponentially $S$-numbers has
a density $h=h(E(S))$ such that
\begin{equation}\label{1}
\sum_{i\leq x,\enskip i\in E(S)} 1 = h(E(S))x+O(\sqrt{x}\log x e^{c\frac{\sqrt{\log x}}
{\log \log x}}),
\end{equation}
with $c=4\sqrt{\frac{2.4}{\log 2}}=7.443083...$ and
\begin{equation}\label{2}
h(E(S))=\prod_{p}\left(1+\sum_{i\geq2}\frac{u(i)-u(i-1)}{p^i}\right),
\end{equation}
where $u(n)$ is the characteristic function of sequence $S:\enskip u(n)=1,$ if\enskip
 $n\in S$ and $u(n)=0$ otherwise.
\end{theorem}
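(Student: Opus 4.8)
The plan is to exploit the multiplicative structure of the indicator function of $E(S)$. Writing $f(N)=1$ if $N\in E(S)$ and $f(N)=0$ otherwise, the defining condition is imposed separately on each prime-power component, so $f$ is multiplicative with $f(p^a)=u(a)$ for $a\ge 1$ and $f(1)=1$. Hence its Dirichlet series factors as an Euler product,
\[
F(s)=\sum_{N\ge1}\frac{f(N)}{N^s}=\prod_p\Bigl(1+\sum_{a\ge1}\frac{u(a)}{p^{as}}\Bigr),
\]
absolutely convergent for $\Re s>1$. Since $1\in S$, i.e. $u(1)=1$, every prime lies in $E(S)$, and $F(s)$ should carry the full polar behaviour of $\zeta(s)$.

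First I would isolate this pole by factoring $F(s)=\zeta(s)G(s)$. Multiplying the local factor of $F$ by $(1-p^{-s})$ and using $u(1)=1$ (and setting $u(0)=1$ for bookkeeping) telescopes the product, giving the local factor of $G$ as
\[
(1-p^{-s})\Bigl(1+\sum_{a\ge1}\frac{u(a)}{p^{as}}\Bigr)=1+\sum_{i\ge2}\frac{u(i)-u(i-1)}{p^{is}}.
\]
The coefficient of $p^{-s}$ vanishes because $u(1)-u(0)=0$, so the lowest surviving term has size $p^{-2s}$; consequently $G(s)=\prod_p\bigl(1+\sum_{i\ge2}(u(i)-u(i-1))p^{-is}\bigr)$ converges absolutely and defines a holomorphic function in $\Re s>\tfrac12$. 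Specializing to $s=1$ identifies the candidate density $h(E(S))=G(1)$ as the product in formula (\ref{2}), and it remains to prove that this $G(1)$ is the genuine asymptotic density with the claimed remainder.

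For the counting I would pass from $F=\zeta\cdot G$ to the arithmetic identity $f=\mathbf 1 * g$, where $g$ is the multiplicative function with $g(p)=0$ and $g(p^a)=u(a)-u(a-1)$ for $a\ge2$; in particular $|g|\le1$ and $g$ is supported on squarefull integers. Then
\[
\sum_{n\le x}f(n)=\sum_{d\le x}g(d)\Bigl\lfloor\frac{x}{d}\Bigr\rfloor=G(1)\,x-x\sum_{d>x}\frac{g(d)}{d}-\sum_{d\le x}g(d)\Bigl\{\frac{x}{d}\Bigr\},
\]
so the main term $h(E(S))x$ emerges at once, and both remainder sums are governed by the distribution of squarefull numbers. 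Bounding them against the counting function of squarefull integers yields an error of size $x^{1/2+o(1)}$; the alternative is a Perron/contour approach, moving the line of integration to just right of $\Re s=\tfrac12$, where the residue at $s=1$ supplies the main term and the vertical integral is controlled by the growth of $\zeta$ and $G$.

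The hard part, and the only place where the precise shape of the bound matters, is converting this qualitative $x^{1/2+o(1)}$ into the explicit remainder $O(\sqrt x\,\log x\,e^{c\sqrt{\log x}/\log\log x})$ with $c=4\sqrt{2.4/\log 2}$. This demands quantitative, constant-explicit control of the squarefull contribution — equivalently of the growth of $G(s)$ as $\Re s\to\tfrac12^+$, together with effective bounds for $\zeta$ near the critical line in the contour version. I expect the subexponential factor $e^{c\sqrt{\log x}/\log\log x}$ to arise from optimizing the contour (or the truncation parameters) against these growth estimates, with the numerical constant $2.4$ entering through the chosen quantitative input; carrying all constants through this optimization is the main technical obstacle, whereas the density formula (\ref{2}) itself follows cleanly from the Euler-product factorization above.
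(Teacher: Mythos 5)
Theorem \ref{t1} is not actually proved in this paper: it is quoted from the author's earlier work \cite{2}, so there is no in-paper argument to set your proposal against. Judged on its own merits, your outline is sound and is the standard convolution-method route. The Euler-product step is correct: since $u(1)=1$, the local factor of $F(s)/\zeta(s)$ telescopes to $1+\sum_{i\ge2}(u(i)-u(i-1))p^{-is}$, giving (\ref{2}) at $s=1$, and the associated multiplicative function $g$ satisfies $|g|\le 1$ and is supported on squarefull (powerful) integers. What you do not seem to notice is that this already finishes the proof. Since the number of squarefull $d\le x$ is $O(\sqrt x)$ with an absolute constant, your two remainder terms are each $O(\sqrt x)$: the tail term because $\sum_{d>x}|g(d)|/d\le\sum_{d>x,\ d\ \mathrm{squarefull}}1/d\ll x^{-1/2}$ by partial summation, and the fractional-part sum trivially because it is bounded by the number of squarefull $d\le x$. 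Hence $\sum_{n\le x}f(n)=h(E(S))\,x+O(\sqrt x)$ uniformly in $S$, which is strictly stronger than the stated remainder $O(\sqrt x\,\log x\,e^{c\sqrt{\log x}/\log\log x})$ and therefore implies it. The ``hard part'' you defer --- reverse-engineering the specific constant $c=4\sqrt{2.4/\log 2}$ by optimizing a contour --- is unnecessary: the theorem only requires an error term no larger than the stated one, not that exact shape, and your elementary bound already delivers this. The only real criticism is that you undersell your own estimate as $x^{1/2+o(1)}$ and present the argument as blocked at a point where it is in fact complete.
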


\indent Note that Sloane's Online Encyclopedia of Integer Sequences \cite{3}
contains some sequences of exponentially $S$-numbers, $S\in \mathbf{G}.$
For example, A005117 $(S=\{1\}),$ A004709 $(S=\{1,2\}),$ A268335 $(S=A005408),$ A138302
$(S=\{2^n\}|_{n\geq0}),$ A197680 $(S=\{n^2\}|_{n\geq1}),$ A115063 $(S=\{F_n\}|_{n\geq2}),$ 
A209061 $(S=A005117),$
etc.\newline
\indent Everywhere below we write $\{h(E(S))\},$ understanding
$\{h(E(S))\}|_{S\in \mathbf{G}}.$
In \cite{2} (Section 6) the author posed the question: is the set $\{h(E(S))\}$
dense in the interval $[\frac{6}{\pi^2}, 1]?$
Berend \cite{1} gave a negative answer by finding a gap in the set
$\{h(E(S))\}$ in the interval
\begin{equation}\label{3}
\left(\prod_{p}(1-\frac{p-1}{p^3}),\enskip \prod_{p}(1-\frac{1}{p^3}) \right)\subset
[\frac{6}{\pi^2}, 1].
\end{equation}
Berend's idea consists of the partition of $\mathbf{G}$ into two subsets - of those
sequences which contain 2 and those that do not contain 2 - and applying Theorem
\ref{t1}. In our study of the set $\{h(E(S)\}$ we use this idea.

\section{Cardinality}
\begin{lemma}\label{L1}
$\mathbf{G}$ is uncountable.
\end{lemma}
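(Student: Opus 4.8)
The plan is to exhibit a bijection between $\mathbf{G}$ and the power set of a countably infinite set, and then to conclude by Cantor's theorem. The essential observation is that a strictly increasing sequence of positive integers is completely determined by the unordered set of its terms, and conversely any nonempty set of positive integers, listed in increasing order, yields exactly one such sequence. Hence the map sending a sequence in $\mathbf{G}$ to the set of its terms is a bijection onto the collection $\{A : A \subseteq \{1,2,3,\dots\},\ 1 \in A\}$ of all subsets of the positive integers whose least element is $1$. I would stress that this correspondence treats finite and infinite sequences uniformly, matching them respectively to the finite and infinite such subsets, so that no member of $\mathbf{G}$ is lost or double-counted.

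Next I would remove the forced element $1$: the assignment $A \mapsto A \setminus \{1\}$ is a bijection from $\{A : A \subseteq \{1,2,3,\dots\},\ 1 \in A\}$ onto the full power set $\mathcal{P}(\{2,3,4,\dots\})$. Composing the two bijections shows that $\mathbf{G}$ is equinumerous with $\mathcal{P}(\{2,3,4,\dots\})$. Since $\{2,3,4,\dots\}$ is countably infinite, Cantor's theorem gives that its power set has cardinality $2^{\aleph_0} > \aleph_0$, whence $\mathbf{G}$ is uncountable.

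If one prefers a self-contained argument that avoids quoting Cantor's theorem, the last step can be replaced by a direct diagonalization phrased in the language of the characteristic functions $u$ used in Theorem \ref{t1}. Suppose, for contradiction, that $\mathbf{G}$ were countable and enumerate it as $S_1, S_2, \dots$ with characteristic functions $u_1, u_2, \dots$. I would then define a new sequence $T$ by declaring $1 \in T$ and, for each $k \geq 1$, placing $k+1$ in $T$ precisely when $u_k(k+1) = 0$. The resulting $T$ lies in $\mathbf{G}$ yet differs from every $S_k$ at the integer $k+1$, contradicting the enumeration.

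There is no serious obstacle in this argument; the only point demanding mild care is to confirm that the requirement that every sequence begin with $1$, together with the admission of both finite and infinite sequences, is absorbed correctly into the set-theoretic picture, so that the displayed correspondence is genuinely both injective and onto.
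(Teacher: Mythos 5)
Your argument is correct and is exactly the paper's proof in expanded form: the paper simply notes that $\mathbf{G}$ is equivalent to the set of all subsets of $\{2,3,4,\dots\}$ and leaves the rest (the bijection via term-sets with the forced element $1$ removed, and the appeal to Cantor) implicit. The optional diagonalization you sketch is a fine self-contained alternative but is not needed; the main route you take matches the paper's.
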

\begin{proof}
Trivially $\mathbf{G}$ is equivalent to the set of all subsets of $\{2,3,4,...\}.$
\end{proof}
\begin{lemma}\label{L2}
For every two distinct $A, B\in \mathbf{G},$ we have $h(E(A))\neq h(E(B)).$
\end{lemma}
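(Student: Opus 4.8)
The plan is to prove that the map $S\mapsto h(E(S))$ is injective by showing it is strictly monotone in the \emph{most significant differing exponent}. The first step is to rewrite the local factor in the product formula \eqref{2}. Since $u(1)=1$, a summation by parts turns the given power series into the closed form
\[
g_S(x):=1+\sum_{i\geq2}\bigl(u(i)-u(i-1)\bigr)x^{i}=1-x^{2}+(1-x)\!\!\sum_{\substack{i\in S\\ i\geq2}}\!\! x^{i},
\]
so that $h(E(S))=\prod_{p}g_S(1/p)$. For $x=1/p\in(0,\tfrac12]$ the tail sum is nonnegative, whence $g_S(x)\geq 1-x^{2}>0$; thus every Euler factor is positive and (by Theorem \ref{t1}) the product converges. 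This positivity is what will let me pass a termwise comparison through the infinite product.

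Next, given distinct $A,B\in\mathbf{G}$: because both sequences begin with $1$, their characteristic functions agree at $1$, so there is a least index $m\geq2$ where they differ; without loss of generality $m\in A$, $m\notin B$, and $u_A(i)=u_B(i)$ for all $i<m$. The key estimate is the termwise comparison of Euler factors. Using $u_A(i)-u_B(i)\geq-1$ for $i>m$ and bounding the unknown tail below by $-\sum_{i>m}x^{i}=-x^{m+1}/(1-x)$, I obtain for $x=1/p\in(0,\tfrac12]$
\[
g_A(x)-g_B(x)=(1-x)\Bigl(x^{m}+\sum_{i>m}\bigl(u_A(i)-u_B(i)\bigr)x^{i}\Bigr)\ \geq\ x^{m}(1-2x).
\]
Hence $g_A(1/p)\geq g_B(1/p)$ for every prime $p$, and the inequality is \emph{strict} for every $p\geq3$, where $1-2/p\geq\tfrac13$ forces $g_A(1/p)-g_B(1/p)\geq\tfrac13 p^{-m}>0$.

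To finish, since $0<g_B(1/p)\leq g_A(1/p)$ for all $p$ with strict inequality at, say, $p=3$, and both products converge to positive numbers, the ratio satisfies $h(E(A))/h(E(B))=\prod_p g_A(1/p)/g_B(1/p)\geq g_A(1/3)/g_B(1/3)>1$, so $h(E(A))>h(E(B))$ and in particular $h(E(A))\neq h(E(B))$. The step I expect to be the genuine obstacle is precisely the prime $p=2$: there $x=\tfrac12$ and the lower bound $x^m(1-2x)$ degenerates to $0$, and in fact the factors at $2$ can coincide exactly (for example $A=\{1,m\}$ versus $B=\{1\}\cup\{m+1,m+2,\dots\}$). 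So monotonicity at the single prime $2$ is insufficient, and the whole argument hinges on extracting strictness from the odd primes while only having $\geq$ at $p=2$; the computation above is designed so that the dominant $x^m$ term from the $m$-th exponent controls the entire tail $\sum_{i>m}x^i$ uniformly for $x\leq\tfrac12$, which is exactly the domination-of-the-leading-digit phenomenon that yields injectivity.
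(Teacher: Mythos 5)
Your proof is correct, and it takes a genuinely different route from the paper's. The paper fixes the longest common prefix of $A$ and $B$ and splits into four cases according to the relative positions of $a(n+1)$ and $b(n+1)$, in each case sandwiching the two densities between those of a truncated sequence $A_{n+1}$ and a completed sequence $A^*$ via (\ref{4}) and then comparing the resulting product bounds factor by factor; strictness in the delicate case (iv) is extracted, exactly as in your argument, from the observation that equality in the key factor inequality can occur only at $p=2$. You instead put the Euler factor into the closed form $g_S(x)=1-x^2+(1-x)\sum_{i\in S,\,i\geq2}x^i$ and prove the single termwise estimate $g_A(x)-g_B(x)\geq x^m(1-2x)$ for $x\in(0,\tfrac12]$, which subsumes all four cases at once, needs no auxiliary comparison sequences, and makes the ``leading differing exponent dominates the tail'' mechanism explicit and uniform. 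Your version buys two things: it directly establishes the stronger ordered statement that the paper only records afterwards as Lemma \ref{L4} (namely $s^*\in A$ implies $h(E(A))>h(E(B))$), and the ratio computation $h(E(A))/h(E(B))=\prod_p g_A(1/p)/g_B(1/p)\geq g_A(1/3)/g_B(1/3)>1$ spells out the passage from factorwise inequalities (with equality possible at $p=2$) to a strict inequality of densities, a step the paper leaves implicit. The only point worth adding to your write-up is a one-line justification that the summation by parts is a legitimate rearrangement, which follows from absolute convergence of $\sum u(i)x^i$ for $|x|<1$; with that remark your argument is complete as written.
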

\begin{proof}
Let $A=\{a(i)\}|_{i\geq1},\enskip B=\{b(i)\}|_{i\geq1}.$ Let $n\geq1$ be maximal index
such that $a(i)=b(i),\enskip i=1,...,n,$ while $a(n+1)\neq b(n+1).$
Note that, if $A_n=\{a(1),...,a(n)\}, \enskip A^*=\{a(1),...,a(n), a(n+1), a(n+1)+1,a(n+1)+2,...\},$ then
\begin{equation}\label{4}
h(E(A_{n+1}))\leq h(E(A))\leq h(E(A^*))
\end{equation}
and analogously for sequence $B.$
\newline
Distinguish four cases:
$$(i)\enskip a(n+1)=a(n)+1,\enskip b(n+1)\geq a(n)+2;$$
$$(ii)\enskip for\enskip k\geq2,\enskip a(n+1)\geq a(n)+k, \enskip b(n+1)=a(n)+1;$$
$$(iii) \enskip for \enskip k\geq3,\enskip a(n+1)=a(n)+k,\enskip a(n)+2\leq b(n+1)
\leq a(n)+k-1;$$
$$(iv) \enskip for \enskip k\geq2,\enskip a(n+1)=a(n)+k,\enskip b(n+1)\geq a(n)+k+1.$$
(i) By (\ref{2}) and (\ref{4}), we have
\begin{equation}\label{5}
h(E(A))\geq \prod_{p}\left(1+\sum_{i=2}^{a(n)}\frac{u(i)-u(i-1)}{p^i}\right),
\end{equation}
where $u(n)$ is the characteristic function of $A.$ Since here $u(a(n+1))-
u(a(n+1)-1)=0,$ then in the right hand side we sum up to $a(n).$
On the other hand,
\begin{equation}\label{6}
h(E(B^*))\leq \prod_{p}\left(1+\sum_{i=2}^{a(n)}\frac{u(i)-u(i-1)}{p^i}-
\frac{1}{p^{a(n)+1}}+\frac{1}{p^{a(n)+2}}\right).
\end{equation}
By (\ref{5})-(\ref{6}), $h(E(B))<h(E(A)).$
\newpage
(ii) Symmetrically to (i), we have
\begin{equation}\label{7}
h(E(B))\geq \prod_{p}\left(1+\sum_{i=2}^{a(n)}\frac{u(i)-u(i-1)}{p^i}\right).
\end{equation}
On the other hand,
\begin{equation}\label{8}
h(E(A^*))\leq \prod_{p}\left(1+\sum_{i=2}^{a(n)}\frac{u(i)-u(i-1)}{p^i}-
\frac{1}{p^{a(n)+1}}+\frac{1}{p^{a(n)+2}}\right).
\end{equation}
So, $h(E(A))<h(E(B)).$\newline
(iii) Again, by (\ref{2}) and (\ref{4}), we have
\begin{equation}\label{9}
h(E(B))\geq \prod_{p}\left(1+\sum_{i=2}^{a(n)}\frac{u(i)-u(i-1)}{p^i}-
\frac{1}{p^{a(n)+1}}+\frac{1}{p^{a(n)+k-1}}\right),
\end{equation}
while
\begin{equation}\label{10}
h(E(A^*))\leq \prod_{p}\left(1+\sum_{i=2}^{a(n)}\frac{u(i)-u(i-1)}{p^i}-
\frac{1}{p^{a(n)+1}}+\frac{1}{p^{a(n)+k}}\right).
\end{equation}
Hence, $h(E(A))<h(E(B)).$\newline
(iv) Symmetrically,
\begin{equation}\label{11}
h(E(B^*))\leq \prod_{p}\left(1+\sum_{i=2}^{a(n)}\frac{u(i)-u(i-1)}{p^i}-
\frac{1}{p^{a(n)+1}}+\frac{1}{p^{a(n)+k+1}}\right),
\end{equation}
while
\begin{equation}\label{12}
h(E(A))\geq \prod_{p}\left(1+\sum_{i=2}^{a(n)}\frac{u(i)-u(i-1)}{p^i}-
\frac{1}{p^{a(n)+1}}+\frac{1}{p^{a(n)+k}}-\frac{1}{p^{a(n)+k+1}}\right)
\end{equation}
and since $\frac{2}{p^{a(n)+k+1}}\leq \frac{1}{p^{a(n)+k}},$ where the equality holds
only in case $p=2,$ then $h(E(A))>h(E(B)).$
\end{proof}
Lemmas \ref{L1} and \ref{L2} directly imply
\begin{theorem}\label{t2}
The set $\{h(E(S)\}|_{S\in \mathbf{G}}$ is uncountable.
\end{theorem}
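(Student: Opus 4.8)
The plan is to view the density assignment as a single map and then invoke the two preceding lemmas. Concretely, I would define $\varphi : \mathbf{G} \to \left[\frac{6}{\pi^2}, 1\right]$ by $\varphi(S) = h(E(S))$. This map is well-defined by Theorem \ref{t1}, which guarantees that $h(E(S))$ exists and lies in $\left[\frac{6}{\pi^2}, 1\right]$ for every $S \in \mathbf{G}$. The set whose cardinality we wish to determine is precisely the image $\varphi(\mathbf{G}) = \{h(E(S))\}|_{S \in \mathbf{G}}$.

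The key step is to observe that $\varphi$ is injective, which is exactly the content of Lemma \ref{L2}: for distinct $A, B \in \mathbf{G}$ we have $h(E(A)) \neq h(E(B))$, that is, $\varphi(A) \neq \varphi(B)$. Consequently $\varphi$ restricts to a bijection from $\mathbf{G}$ onto its image $\varphi(\mathbf{G})$, and hence the domain and the image have the same cardinality.

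Finally I would combine this with Lemma \ref{L1}. Since $\mathbf{G}$ is uncountable and $\varphi$ is a bijection onto $\varphi(\mathbf{G})$, the image $\{h(E(S))\}|_{S \in \mathbf{G}}$ is uncountable as well. There is no genuine obstacle at this stage: all the substantive work has already been carried out in Lemma \ref{L2}, whose four-case analysis of the first index at which two sequences differ establishes the strict separation of densities responsible for injectivity. Given that lemma, the present theorem is a one-line consequence of the fact that cardinality is preserved under a bijection.
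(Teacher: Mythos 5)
Your proposal is correct and coincides with the paper's argument: the paper derives Theorem \ref{t2} directly from Lemma \ref{L1} (uncountability of $\mathbf{G}$) and Lemma \ref{L2} (injectivity of $S\mapsto h(E(S))$), exactly as you do. Your write-up merely makes the bijection-onto-image step explicit.
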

Denote by $\mathbf{G}(F)$ the subset of the finite sequences from $\mathbf{G}.$
Since the set of all finite subsets of a countable set is countable, then
$\mathbf{G}(F)$ is countable and then the set
$\{h(E(S)\}|_{S\in \mathbf{G}(F)}$ is also countable.

\section{Perfectness}
\begin{lemma}\label{L3}
Every point of the set $h(E(S))$ is an accumulation point.
\end{lemma}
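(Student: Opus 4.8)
The plan is to show that the set $\{h(E(S))\}$ has no isolated points by perturbing any given sequence at an arbitrarily large exponent. Fix $S\in\mathbf{G}$ with characteristic function $u$, and for each integer $M\geq2$ let $S^{(M)}\in\mathbf{G}$ be the sequence obtained by \emph{toggling} the membership of $M$, i.e. adjoining $M$ to $S$ if $M\notin S$ and deleting $M$ from $S$ if $M\in S$. Since $M\geq2$, the result is still an increasing sequence beginning with $1$, so $S^{(M)}\in\mathbf{G}$, and obviously $S^{(M)}\neq S$. By Lemma \ref{L2} we have $h(E(S^{(M)}))\neq h(E(S))$ for every such $M$. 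It therefore suffices to prove that $h(E(S^{(M)}))\to h(E(S))$ as $M\to\infty$: this produces members of the set distinct from $h(E(S))$ lying in every neighborhood of $h(E(S))$, which is exactly the assertion that $h(E(S))$ is an accumulation point.

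The first step is a uniform positive lower bound on the Euler factors in (\ref{2}), valid for all $S$ at once. Writing the $p$-th factor as $f_p(S)=1+\sum_{i\geq2}\frac{u(i)-u(i-1)}{p^i}$ and telescoping the sum (substituting $j=i-1$ in the shifted part and using $u(1)=1$) yields
\[
f_p(S)=1-\frac{1}{p^2}+\Bigl(1-\frac1p\Bigr)\sum_{i\geq2}\frac{u(i)}{p^i}\ \geq\ 1-\frac{1}{p^2}\ \geq\ \frac34 ,
\]
since the remaining sum is nonnegative and $p\geq2$. Thus every factor lies in $[\tfrac34,1]$, independently of $S$; in particular the products defining $h(E(S))$ and $h(E(S^{(M)}))$ are bounded away from $0$.

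Next I would estimate the effect of the toggle. Flipping the value $u(M)$ changes only the two summands indexed $i=M$ and $i=M+1$ in (\ref{2}), and each changes by $\pm1$ in its numerator, so $|f_p(S^{(M)})-f_p(S)|\leq 2/p^M$ for every prime $p$. Combined with $f_p(S)\geq\tfrac34$ this gives $\bigl|f_p(S^{(M)})/f_p(S)-1\bigr|\leq 8/(3\,p^M)$. Taking logarithms and summing over primes,
\[
\left|\log\frac{h(E(S^{(M)}))}{h(E(S))}\right|\ \leq\ \sum_{p}\left|\log\frac{f_p(S^{(M)})}{f_p(S)}\right|\ \leq\ C\sum_{p}\frac{1}{p^M}\ \leq\ C\bigl(\zeta(M)-1\bigr)\ \longrightarrow\ 0
\]
as $M\to\infty$, for an absolute constant $C$. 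Hence $h(E(S^{(M)}))\to h(E(S))$, and together with Lemma \ref{L2} this completes the proof.

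The delicate point, and the reason the lower bound above is needed, is that a single perturbation of $S$ alters \emph{every} Euler factor simultaneously, so I must keep the full infinite product under control rather than a single factor; the uniform estimate $f_p(S)\geq\tfrac34$ (equivalently, that no factor can approach $0$) is precisely what makes the telescoped product bound converge, and securing it cleanly is the crux of the argument.
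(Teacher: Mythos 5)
Your proof is correct, but it takes a genuinely different route from the paper's. The paper splits into two cases: for finite $S=\{s(1),\dots,s(k)\}$ it appends a single large element $n\geq s(k)+2$ and lets $n\to\infty$, while for infinite $S$ it approximates by the truncations $S_n=\{s(1),\dots,s(n)\}$; in both cases the convergence $h(E(S_n))\to h(E(S))$ is justified by uniform convergence of the series over primes, allowing the limit to be passed inside the infinite product. You instead use a single unified construction --- toggling membership of an arbitrarily large $M$ --- and control the resulting perturbation quantitatively: the telescoped identity $f_p(S)=1-\tfrac{1}{p^2}+(1-\tfrac1p)\sum_{i\geq2}u(i)p^{-i}\geq\tfrac34$ gives a uniform lower bound on every Euler factor, the toggle changes each factor by at most $2p^{-M}$, and summing $|\log(f_p(S^{(M)})/f_p(S))|\leq C p^{-M}$ over primes yields an explicit rate $O(\zeta(M)-1)$. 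Your version buys a case-free argument with an effective error bound, and you are also more careful than the paper in explicitly invoking Lemma \ref{L2} to guarantee that the approximating values are \emph{distinct} from $h(E(S))$ (the paper leaves this implicit). What the paper's construction buys in exchange is structural information used later: the one-sided approximations (from above for finite $S$, and the truncations for infinite $S$) feed directly into the analysis of gaps in Lemma \ref{L5}, whereas your toggles do not track from which side the limit is approached. Both proofs are valid for Lemma \ref{L3} itself.
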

\begin{proof}
Distinguish two cases: a) $S$ is finite set; b) $S$ is infinite set.
\newpage
a) Let $S=\{s(1),...,s(k)\}\in \mathbf{G}(F).$ Let $n\geq s(k)+2.$ Denote by
$S_n$ the
sequence $S_n=\{s(1),...,s(k),n\}.$ Then, by (\ref{2}),
\begin{equation}\label{13}
h(E(S_n))-h(E(S))=
\end{equation}
$$\prod_{p}\left(1+\sum_{i=2}^{s(k)}\frac{u(i)-u(i-1)}{p^i}-\frac{1}{p^{s(k)+1}}
+\frac{1}{p^n}\right) - $$ $$\prod_{p}\left(1+\sum_{i=2}^{s(k)}\frac{u(i)-u(i-1)}{p^i}-
\frac{1}{p^{s(k)+1}}\right).$$
For the first product $\prod_{p}(n),$
$$\prod_{p}(n)=\exp\left(\sum_{p}\log\left(1+\sum_{i=2}^{s(k)}\frac{u(i)-u(i-1)}{p^i}-\frac{1}{p^{s(k)+1}}
+\frac{1}{p^n}\right)\right),$$
the series over primes converges uniformly since
$$\sum_{p}\sum_{i\geq2}\frac{|u(i)-u(i-1)|}{p^i}\leq \sum_{p}\sum_{i\geq2}
\frac{1}{p^i}=\sum_{p}\frac{1}{(p-1)p}.$$
Therefore,
$\lim_{n\rightarrow\infty}(\prod_{p}(n))=\prod_{p}(\lim_{n\rightarrow\infty}(...)) $
which coincides with the second product. So $\lim_{n\rightarrow\infty}
 h(E(S_n))=h(E(S)).$\newline
 b) Let $S=\{s(1),...,s(k),...\}\in \mathbf{G}$ be infinite sequence. Let  $S_n=\{s(1),...,s(n)\}$ be the $n$-partial sequence of $S.$ \enskip In the same way, taking
 into account the uniform convergence of the product for density of $S_n,$ we find that
 $\lim_{n\rightarrow\infty} h(E(S_n))=h(E(S)).$
\end{proof}
\begin{theorem}\label{t3}
The set $\{h(E(S))\}$ is a perfect set.
\end{theorem}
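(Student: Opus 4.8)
The plan is to use the standard characterization that a subset of $\mathbb{R}$ is perfect precisely when it is closed and has no isolated points. Lemma \ref{L3} already shows that every point of $\{h(E(S))\}$ is an accumulation point, so the entire remaining content of the theorem reduces to proving that $\{h(E(S))\}$ is \emph{closed}.

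To this end I would first identify $\mathbf{G}$ with the space $\Omega=\{0,1\}^{\{2,3,\dots\}}$ of all $0$--$1$ sequences $u=(u(i))_{i\ge 2}$, where $u$ is the characteristic function of $S$ (recall $u(1)=1$ for every $S\in\mathbf{G}$), and equip $\Omega$ with the product topology; by Tychonoff's theorem $\Omega$ is compact. Under this identification, formula (\ref{2}) defines a map $\Phi:\Omega\to\mathbb{R}$, $\Phi(u)=\prod_{p}a_p(u)$ with $a_p(u)=1+\sum_{i\ge 2}\frac{u(i)-u(i-1)}{p^i}$, whose image is exactly $\{h(E(S))\}$ because every $u\in\Omega$ corresponds to a (finite or infinite) sequence $S\in\mathbf{G}$. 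The crucial point is that a telescoping computation gives $a_p(u)=(1-\frac1p)\bigl(1+\sum_{i\in S}p^{-i}\bigr)\in[\,1-p^{-2},\,1\,]$, so all factors lie in $[\tfrac34,1]$; and if two sequences $u,u'$ agree for all $i\le M$, then $|a_p(u)-a_p(u')|\le \sum_{i>M}2p^{-i}=\frac{2}{p^{M}(p-1)}$.

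Combining this with the elementary inequality $\bigl|\prod_p a_p-\prod_p a'_p\bigr|\le\sum_p|a_p-a'_p|$, valid whenever all factors lie in $[0,1]$, I obtain $|\Phi(u)-\Phi(u')|\le \sum_p\frac{2}{p^{M}(p-1)}$, a quantity independent of $u,u'$ that tends to $0$ as $M\to\infty$ — this is precisely the uniform convergence of the Euler product already exploited in the proof of Lemma \ref{L3}. Hence $\Phi$ is continuous on $\Omega$, so its image $\{h(E(S))\}$ is the continuous image of a compact space, and is therefore compact and in particular closed in $\mathbb{R}$. Together with Lemma \ref{L3}, this shows that $\{h(E(S))\}$ is closed and has no isolated points, i.e. is perfect.

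The main obstacle is conceptual rather than computational: Lemma \ref{L3} supplies only one half of perfectness (no isolated points), and one must separately secure closedness, which is where the compactness of $\Omega$ together with the uniform control of the tails of the product does the work. An entirely elementary alternative avoids topology altogether: given any convergent sequence $h(E(S_k))\to\ell$, a diagonal argument extracts a subsequence whose characteristic functions converge pointwise to the characteristic function of some $S\in\mathbf{G}$, and the same tail estimate forces $h(E(S_{k_j}))\to h(E(S))=\ell$, so $\ell\in\{h(E(S))\}$, proving closedness directly.
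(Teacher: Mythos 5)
Your proof is correct, and it takes a genuinely different route from the paper's. The paper also begins with Lemma \ref{L3} to rule out isolated points, but for closedness it writes $\{h(E(S))\}=\overline{\bigcup g}=\bigcap\overline{g}$, where $g$ runs over the gaps and $\overline{g}$ denotes the complement of $g$ in $[\frac{6}{\pi^2},1]$, and then concludes that the set is closed as an intersection of closed sets. That identity presupposes that every point of the complement of $\{h(E(S))\}$ lies in some gap, which is essentially the closedness being asserted, so the paper's argument rests on the gap analysis of Section 4 (Proposition \ref{P1} and Lemma \ref{L5}) and is much less self-contained than yours. Your route --- identifying $\mathbf{G}$ with the compact space $\{0,1\}^{\{2,3,\dots\}}$, verifying via the telescoped form $a_p(u)=(1-\frac{1}{p})(1+\sum_{i\in S}p^{-i})$ and the tail estimate that the density map $\Phi$ of (\ref{2}) is uniformly continuous, and invoking that a continuous image of a compact space is compact --- proves the stronger statement that $\{h(E(S))\}$ is compact, with no reference to gaps at all; your sequential/diagonal variant even avoids Tychonoff, since $\{0,1\}^{\mathbb{N}}$ is metrizable and its sequential compactness is elementary. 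The small price is the topological setup; what it buys is an independent and airtight proof of closedness, supplying a step that the paper's own proof essentially glosses over.
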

A proof we give in Section 5.
\section{Gaps}
Let us show that, for every finite $S\in\mathbf{G},$ with the exception of
$S=\{1\},$ there exists an $\varepsilon>0$ such that the image of $h$ is disjoint
from the interval $(h(E(S))-\varepsilon, h(E(S)).$

We need a lemma.
\begin{lemma}\label{L4}
Let $A, B\in \mathbf{G}$ be distinct sequences. Let $s^*=s^*(A,B)$ be the smallest
number which is a term of one of them, but not in another. If, say, $s^*\in A,$
then $h(E(A))>h(E(B)).$
\end{lemma}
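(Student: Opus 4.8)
The plan is to first recast the density formula (\ref{2}) in a cleaner multiplicative shape and then compare the two products factor by factor. Since every sequence in $\mathbf{G}$ begins with $1$, we have $u(1)=1$, and I would reindex the inner series in (\ref{2}): writing $T_p(S)=\sum_{n\in S}p^{-n}$, the absolute convergence of the series (a geometric bound, exactly as in the proof of Lemma \ref{L3}) lets me rearrange the telescoping sum into
\begin{equation}\label{L4a}
1+\sum_{i\geq2}\frac{u(i)-u(i-1)}{p^{i}}=\left(1-\frac1p\right)\bigl(1+T_p(S)\bigr),
\end{equation}
so that
\begin{equation}\label{L4b}
h(E(S))=\prod_{p}\left(1-\frac1p\right)\bigl(1+T_p(S)\bigr).
\end{equation}
As a sanity check, $S=\{1\}$ gives $\prod_p(1-p^{-2})=6/\pi^{2}$.

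Next I would exploit that $A$ and $B$ share the same initial segment below $s^{*}$. Setting $g_p(S)=1+T_p(S)$, the common terms cancel in the difference, leaving
\begin{equation}\label{L4c}
g_p(A)-g_p(B)=\sum_{\substack{n\in A\\ n\geq s^{*}}}p^{-n}-\sum_{\substack{n\in B\\ n\geq s^{*}}}p^{-n}.
\end{equation}
Because $s^{*}\in A$, the first sum is at least $p^{-s^{*}}$; because $s^{*}\notin B$, every term of the second sum has $n\geq s^{*}+1$, so the second sum is at most $\sum_{n\geq s^{*}+1}p^{-n}=p^{-s^{*}}/(p-1)$. Hence
\begin{equation}\label{L4d}
g_p(A)-g_p(B)\geq p^{-s^{*}}\,\frac{p-2}{p-1}\geq0,
\end{equation}
with strict inequality for every odd prime $p$.

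Finally I would pass from the factors to the full products. Since $h(E(B))\geq 6/\pi^{2}>0$ by Theorem \ref{t1} and the factors $(1-1/p)$ cancel term by term in each partial product,
\begin{equation}\label{L4e}
\frac{h(E(A))}{h(E(B))}=\prod_{p}\frac{g_p(A)}{g_p(B)},
\end{equation}
and the right-hand product converges, since $g_p(A)/g_p(B)=1+O(p^{-s^{*}})$ with $s^{*}\geq2$. By (\ref{L4d}) every factor is $\geq1$, while the factor at $p=3$ is strictly $>1$; as all the remaining factors are $\geq1$, the convergent product is bounded below by the single factor $g_3(A)/g_3(B)>1$, which yields $h(E(A))>h(E(B))$.

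The main obstacle is exactly the prime $p=2$, where the bound (\ref{L4d}) degenerates to $g_2(A)-g_2(B)\geq0$ and equality genuinely occurs (for instance when $A$ has no term above $s^{*}$ while $B$ contains all of them). Strictness therefore cannot be read off from the $2$-factor; the argument must isolate a strictly larger factor at an odd prime and then justify that a single strict factor among infinitely many factors that are all $\geq1$ forces the whole convergent product to exceed $1$. The positivity and convergence underpinning this last step rest on the same uniform geometric estimate already used in the proof of Lemma \ref{L3}.
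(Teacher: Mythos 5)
Your argument is correct, and it takes a genuinely different route from the paper. The paper proves Lemma \ref{L4} by declaring it a corollary of the four-case analysis in the proof of Lemma \ref{L2}, where $A$ and $B$ are sandwiched between a truncation and a completion ($h(E(A_{n+1}))\leq h(E(A))\leq h(E(A^*))$) and the resulting Euler products are compared case by case according to the relative positions of $a(n+1)$ and $b(n+1)$. You instead rewrite (\ref{2}) once and for all as $h(E(S))=\prod_p\bigl(1-\frac1p\bigr)\bigl(1+T_p(S)\bigr)$ with $T_p(S)=\sum_{n\in S}p^{-n}$ (the telescoping computation checks out, using $u(1)=1$), and then compare the two densities prime by prime: since $A$ and $B$ agree below $s^*$, the difference $T_p(A)-T_p(B)$ is at least $p^{-s^*}\frac{p-2}{p-1}$, which is nonnegative for all $p$ and strictly positive for $p\geq3$, so the convergent product $\prod_p\frac{1+T_p(A)}{1+T_p(B)}$ of factors $\geq1$ exceeds $1$ because of the factor at $p=3$. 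This buys you three things the paper's route does not give directly: a uniform argument with no case distinctions, an explicit explanation of why the prime $2$ alone cannot yield strictness (in the paper this appears only as the passing remark in case (iv) that $\frac{2}{p^{a(n)+k+1}}\leq\frac{1}{p^{a(n)+k}}$ with equality exactly at $p=2$), and a quantitative lower bound $h(E(A))/h(E(B))\geq\frac{1+T_3(A)}{1+T_3(B)}>1$. The only steps worth stating a little more carefully are the ones you already flag: positivity of all factors $1+T_p(B)\geq1$ justifies the cancellation of the $(1-1/p)$'s in the ratio, and the monotonicity of the partial products (all factors $\geq1$) is what lets a single strict factor force the limit above $1$.
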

\begin{proof}
In fact, the lemma is a corollary of the proof of Lemma \ref{L2}. Comparing with the proof of Lemma
 \ref{L2}, we have $s^*(A,B)=n+1.$ We see that in all four
cases in the proof of Lemma \ref{L2}, the statement of Lemma \ref{L4} is confirmed.
\end{proof}
\newpage
\begin{proposition}\label{P1}
Let $S_1=\{s(1),...,s(k)\}\in \mathbf{G}(F), \enskip k\geq2,$ and
\enskip$S_2=\{s(1),...,s(k-1),s(k)+1,s(k)+2,...\}.$ Then the interval
\begin{equation}\label{14}
 (h(E(S_2)), \enskip h(E(S_1)))
\end{equation}
is a gap in the set $\{h(E(S)): S\in \mathbf{G}\}.$
\end{proposition}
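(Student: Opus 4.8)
The plan is to reduce the whole statement to the monotonicity principle of Lemma \ref{L4}, which orders two densities by the least integer lying in the symmetric difference of the two sequences. First I would record the structural features of the pair $(S_1,S_2)$ that drive everything. The two sequences agree on every integer $m<s(k)$, where each contains precisely $s(1),\dots,s(k-1)$; above $s(k)$ they are as different as possible, since $S_1$ contains none of $s(k),s(k)+1,\dots$, whereas $S_2$ omits only $s(k)$ and contains all of $s(k)+1,s(k)+2,\dots$. Applying Lemma \ref{L4} to $(S_1,S_2)$, whose least distinguishing integer is $s(k)\in S_1\setminus S_2$, gives at once $h(E(S_2))<h(E(S_1))$, so the interval (\ref{14}) is nonempty. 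Note that the hypothesis $k\ge2$ is exactly what keeps $1=s(1)$ in $S_2$, so that $S_2\in\mathbf{G}$ and both endpoints are attained (by $S_2$ and $S_1$).

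Next I would fix an arbitrary $T\in\mathbf{G}$ with $h(E(S_2))<h(E(T))<h(E(S_1))$ and derive a contradiction; ruling out every such $T$ shows the open interval contains no density and hence is a gap. From $h(E(T))<h(E(S_1))$, Lemma \ref{L2} gives $T\neq S_1$, and Lemma \ref{L4} then forces the least integer $s^\ast=s^\ast(T,S_1)$ of the symmetric difference to lie in $S_1\setminus T$. Writing $s^\ast=s(j)$ with $1\le j\le k$, this means $T$ coincides with $S_1$ on all integers below $s(j)$ and omits $s(j)$ itself.

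The core step is to deduce that the least distinguishing integer of the pair $(T,S_2)$ also lies in $S_2\setminus T$, for then Lemma \ref{L4} gives $h(E(T))<h(E(S_2))$, contradicting $h(E(T))>h(E(S_2))$. If $j<k$ then $s(j)<s(k)$, so $S_1$ and $S_2$ still agree through $s(j)$; hence $T$ agrees with $S_2$ below $s(j)$ and omits $s(j)\in S_2$, giving $s^\ast(T,S_2)=s(j)\in S_2\setminus T$. If $j=k$ then $T$ agrees with $S_1$, and therefore with $S_2$, on all integers up to and including $s(k)$ (both omit $s(k)$); above $s(k)$ the sequence $S_2$ is saturated, so any distinguishing integer is one that $T$ omits, and since $T\neq S_2$ there is a least such, again lying in $S_2\setminus T$. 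The remaining possibility $T=S_2$ only reproduces the endpoint value $h(E(S_2))$, which the open interval excludes.

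The part I expect to require the most care is not a computation but the bookkeeping in the case $j=k$: one must check that the agreement of $T$ with $S_2$ genuinely persists through the index $s(k)$ — using that both sequences omit $s(k)$ — before exploiting the saturation of $S_2$ beyond $s(k)$ to place the distinguishing integer on the side $S_2\setminus T$. Once the direction of each application of Lemma \ref{L4} is fixed, the argument closes with no further estimates, since all the analytic content is already packaged inside Lemmas \ref{L2} and \ref{L4}.
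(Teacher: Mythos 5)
Your proof is correct and follows essentially the same route as the paper's: both arguments rest entirely on Lemma \ref{L4}, splitting according to whether the least distinguishing integer of $(T,S_1)$ lies in $T$ (forcing $h(E(T))>h(E(S_1))$) or in $S_1$ (in which case one checks, by the same $j<k$ versus $j=k$ case analysis, that the least distinguishing integer of $(T,S_2)$ lies in $S_2$, forcing $h(E(T))<h(E(S_2))$). Your write-up merely recasts the paper's direct exclusion argument as a proof by contradiction and adds the (implicit in the paper) verification that the interval is nonempty.
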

\begin{proof}
Consider other than $S_1, S_2$ any sequence $S\in \mathbf{G}$ which contains
$s^*(S_1,S).$ By Lemma \ref{L4}, $h(E(S))>h(E(S_1)).$ So, $h(E(S))$ is not in interval
(\ref{14}). Now consider other than $S_1, S_2$ any sequence $S\in \mathbf{G}$ which
does not contain $s^*(S_1,S).$ Then $S_2$ contains $s^*(S,S_2).$ Indeed, 1) $S$ cannot
contain all terms $s(1),...,s(k)$ (since $S$ differs from $S_1,$ it should contain
additional terms, the smallest of which is $s^*(S,S_1)\in S$ that contradicts
the condition); 2) if $i,\enskip 1\leq i\leq k,$ is the smallest for which $S$ misses
$s(i),$ then, by the
condition, all terms of $S$ are more than than $s(i).$ So
$s^*(S,S_2)=s(i)\in S_2,$ if $i<k,$ while, if i=k, since $S$ differs from
$S_2,\enskip s^*(S,S_2)=s(k)+j\in S_2,$ where $j$ is the smallest for which $s_k+j$ is not
in $S.$ Hence, by Lemma \ref{L4}, $h(E(S_2))>h(E(S))$ and again $h(E(S))$ is not in
interval (\ref{14}).
\end{proof}
\begin{lemma}\label{L5}
Every gap in $\{h(E(S))\}$ has the form described in Proposition
$\ref{P1}.$
\end{lemma}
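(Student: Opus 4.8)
The plan is to recast the statement as an order-theoretic fact about jumps. As in the proof of Lemma \ref{L1}, identify each $S\in\mathbf{G}$ with its characteristic function $u_S$ restricted to $\{2,3,4,\dots\}$, so that $\mathbf{G}$ is the set of all binary sequences on $\{2,3,\dots\}$. Lemma \ref{L4} then says exactly that $h(E(\cdot))$ is strictly monotone for the first-difference (lexicographic) order, in which, at the smallest index of disagreement, the sequence carrying $1$ receives the larger density; combined with the injectivity of Lemma \ref{L2}, this makes $h(E(\cdot))$ an order isomorphism from $\mathbf{G}$ onto $H:=\{h(E(S))\}$. Under this dictionary the pair of Proposition \ref{P1} is, with $w$ the common prefix, the finite sequence $S_1\leftrightarrow w\,1\,0\,0\,0\cdots$ (value $1$ at position $m=s(k)$, then zeros) and the cofinite sequence $S_2\leftrightarrow w\,0\,1\,1\,1\cdots$; so a gap of the required form is precisely a consecutive pair in the order, and Lemma \ref{L5} amounts to showing that every gap arises from such a pair.

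I would first check that the endpoints of a gap are attained, i.e.\ that $H$ is closed. This uses the same uniform convergence as in Lemma \ref{L3}: the map sending a binary sequence to its density (\ref{2}) is continuous for the product topology on $\{0,1\}^{\{2,3,\dots\}}$, because modifying a characteristic function only at positions $>N$ changes each factor of (\ref{2}) by at most $\sum_{i>N}2p^{-i}$, whose sum over $p$ tends to $0$ as $N\to\infty$. Since $\{0,1\}^{\{2,3,\dots\}}$ is compact, $H=h(E(\mathbf{G}))$ is compact, hence closed; therefore any gap $(a,b)$ satisfies $a=h(E(A))$, $b=h(E(B))$ with $A\prec B$ and no sequence lying strictly between $A$ and $B$ in the order.

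The heart of the argument is then the classification of these consecutive pairs. Let $m$ be the first index at which $A$ and $B$ differ, so that they share a common prefix $w$ on $\{2,\dots,m-1\}$ with $u_A(m)=0$ and $u_B(m)=1$. I claim $u_A(j)=1$ for all $j>m$ and $u_B(j)=0$ for all $j>m$. If $u_A(j_0)=0$ for some $j_0>m$, then the sequence $C$ agreeing with $A$ up to $j_0-1$, carrying $1$ at $j_0$ and $0$ afterwards, beats $A$ first at $j_0$ and loses to $B$ first at $m$, so $A\prec C\prec B$, contradicting consecutiveness; symmetrically, if $u_B(j_0)=1$ for some $j_0>m$, the sequence agreeing with $B$ up to $j_0-1$ and carrying $0$ from $j_0$ on lies strictly between $A$ and $B$. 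Hence $B=w\,1\,0\,0\,0\cdots$ is finite with largest term $m$, say $B=\{s(1),\dots,s(k)\}$, $s(k)=m$, with $k\ge2$ because $B$ contains both $1$ and $m\ge2$, while $A=w\,0\,1\,1\,1\cdots=\{s(1),\dots,s(k-1),s(k)+1,s(k)+2,\dots\}$. Thus $(B,A)=(S_1,S_2)$ and the gap is $(h(E(S_2)),h(E(S_1)))$, exactly as in Proposition \ref{P1}.

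The main obstacle is this interpolation step: whenever the tails of $A$ and $B$ are not respectively all-ones and all-zeros, one must exhibit an explicit third sequence whose density is squeezed strictly between $h(E(A))$ and $h(E(B))$, and the order isomorphism is what turns this into a purely combinatorial construction instead of a delicate estimate on the product (\ref{2}). A secondary point needing care is the closedness of $H$, which guarantees that the endpoints of an arbitrary gap are themselves densities; if one adopts the convention that a \emph{gap} is a maximal complementary interval whose endpoints already lie in $H$, this second step may be omitted and one passes directly from the gap to the consecutive pair $(A,B)$.
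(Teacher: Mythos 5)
Your proof is correct, but it follows a genuinely different route from the paper's. The paper argues pointwise with limits: using Lemma \ref{L4} together with the uniform-convergence argument of Lemma \ref{L3}, it shows that if $S$ omits infinitely many integers then the densities $h(E(S\cup\{k\}))$ approach $h(E(S))$ from the right, so no gap can sit to the right of $h(E(S))$ unless $S$ is cofinite (hence of the form $S_2$), and dually that no gap can sit to the left of $h(E(S))$ unless $S$ is finite; Proposition \ref{P1} then pins down the matched pair. You instead package Lemma \ref{L4} as the statement that $h(E(\cdot))$ is an order isomorphism from $\mathbf{G}$ with the lexicographic order onto its image, reduce gaps to consecutive pairs, and classify those pairs by exhibiting a single explicit interpolating sequence whenever the tails are not all-ones/all-zeros --- no limit argument is needed at that stage, only the strict monotonicity. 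Your approach also buys something the paper glosses over: you prove up front that the image is closed (continuity of (\ref{2}) on the compact product space), so the endpoints of an arbitrary gap are genuinely attained by sequences; the paper uses this implicitly in asserting that every gap lies "in a right neighborhood of $h(E(S_2))$", and its formal closedness statement (Theorem \ref{t3}) is only proved afterwards, using this very lemma. The trade-off is that the paper's limit computations are reused directly in Lemma \ref{L3} and Theorem \ref{t3}, whereas your compactness step is an extra (though standard) piece of analysis; in exchange your classification of consecutive pairs is purely combinatorial and leaves no case unexamined.
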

\begin{proof}
Indeed, the gap (\ref{14}) is in a right neighborhood of $h(E(S_2)).$
Let a sequence $S\in \mathbf{G}$ do not contain any infinite
set of positive integers $K.$ Adding to $S$ $k\in K,$ which goes to infinity,
we obtain set $S_k$ such that $h(E(S_k))>h(E(S))$ and $h(E(S_k))\rightarrow h(E(S)).$
So, in a right neighborhood of $h(E(S))$ cannot be a gap of $\{h(E(S))\}.$
In opposite case, when $S\in \mathbf{G}$ does not contain only a finite set of
positive integers, in a right neighborhood of $h(E(S))$ a gap of $\{h(E(S))\}$
is possible, but in this case $S$ has the form of $S_2$ in Proposition \ref{P1}.
Also, if $S\in \mathbf{G}$ is infinite, then in a left neighborhood of $h(E(S))$
cannot be a gap of $\{h(E(S))\},$ since $h(E(S))$ is a limiting point of
$\{h(E(S_n))\},$ where $S_n$ is the $n$-partial sequence of $S.$
\end{proof}
It is easy to see that, for distinct sequences $S_1,$ the gaps $(\ref{14})$ are
disjoint.

From Propositions \ref{P1} and Lemma \ref{L5} we have the statement:
\begin{theorem}\label{t4}
The set $\{h(E(S))\}$ has countably many gaps.
\end{theorem}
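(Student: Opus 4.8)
The plan is to exhibit an injection from the set of gaps of $\{h(E(S))\}$ into the countable set $\mathbf{G}(F)$ of finite sequences, so that the number of gaps is bounded by the cardinality of $\mathbf{G}(F)$. The structural work has already been done in Proposition \ref{P1} and Lemma \ref{L5}; what remains is to package it as a counting argument.

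First I would invoke Lemma \ref{L5}: every gap of $\{h(E(S))\}$ has the form $(h(E(S_2)),\, h(E(S_1)))$ described in Proposition \ref{P1}, where $S_1=\{s(1),\dots,s(k)\}\in\mathbf{G}(F)$ with $k\geq2$, and $S_2=\{s(1),\dots,s(k-1),s(k)+1,s(k)+2,\dots\}$ is completely determined by $S_1$. This assigns to each gap a unique finite sequence $S_1$, defining a map $\Phi$ from the set of all gaps into $\mathbf{G}(F)$.

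Next I would verify that $\Phi$ is injective. Since $S_2$ is determined by $S_1$, a gap is recovered from its image $\Phi$(gap)$=S_1$ as the interval $(h(E(S_2)),\, h(E(S_1)))$; thus two gaps with the same image coincide. Equivalently, one may use the remark preceding the theorem that for distinct $S_1$ the gaps $(\ref{14})$ are disjoint, together with Lemma \ref{L2}, which guarantees distinct right endpoints $h(E(S_1))$ for distinct finite sequences $S_1$. Either way, $\Phi$ is injective.

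Finally, as already observed in the excerpt, $\mathbf{G}(F)$ is countable, being essentially the set of finite subsets of the countable set $\{2,3,4,\dots\}$. An injection from the set of gaps into a countable set forces the set of gaps to be countable, which is the assertion of Theorem \ref{t4}. The only step demanding attention is the injectivity of $\Phi$, and even that reduces immediately to Lemma \ref{L2}; the rest is bookkeeping over the already-established classification of gaps.
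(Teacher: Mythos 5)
Your proposal is correct and follows essentially the same route as the paper: the paper's proof of Theorem \ref{t4} is simply the observation that, by Lemma \ref{L5}, every gap is of the form (\ref{14}) and hence indexed by a finite sequence $S_1\in\mathbf{G}(F)$, which is a countable set. You merely make explicit the injection and its verification, which the paper leaves implicit.
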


\section{Proof of Theorem 3}
\begin{proof}
By Lemma \ref{L3}, the set $\{h(E(S))\}$ does not contain isolated points.
For a set $A\subseteq [\frac{6}{\pi^2}, 1],$ let $\overline{A}$ be $[\frac{6}{\pi^2}, 1]
\backslash A.$ Let, further, $\{g\}$ be the set of all gaps of $\{h(E(S))\}.$ Then we have
\newpage
$$\{h(E(S))\}=\overline {\bigcup g}=\bigcap\overline{g}. $$
Since a gap $g$ is an open interval, then $\overline{g}$ is a closed set.
But arbitrary intersections of closed sets are closed. Thus the set $\{h(E(S))\}$
is closed without isolated points. So it is a perfect set.
\end{proof}

\section{Conclusion}
Thus, by Theorems \ref{t2}-\ref{t4}, the set $\{h(E(S))\}$ is a perfect
set with a countable set of gaps which associate with some left-sided neighborhoods
of the densities of all exponentially finite $S$-sequences,
$S\in \mathbf{G},$ except for $S=\{1\}.$ It is natural to conjecture that the sum of lengths
of all gaps equals the length of the whole interval $[\frac{6}{\pi^2}, 1],$ or, the
same, that the set $\{h(E(S))\}$ has zero measure. This important question
we remain open.
\begin{remark} Possible to solve this problem could help a remark that the deleting
in $(\ref{2})$ 0's (when $u_i=u_{i-1})$ we obtain an alternative sequence of $-1,1.$
\end{remark}

\section{Acknowledgement}
The author is grateful to Daniel Berend for very useful discussions.

\end{document}